\documentclass{amsart}

\usepackage{fullpage}
\usepackage{amsmath}
\usepackage{amsfonts}
\usepackage{amsthm}
\usepackage{amssymb}
\usepackage{latexsym}
\usepackage{dsfont}
\usepackage{enumitem}

\usepackage{bm,bbm}
\usepackage{url}


\usepackage{graphicx}
\usepackage{color}
\usepackage{tikz}
\usetikzlibrary{shapes,snakes,arrows,patterns}
\usepackage{float}

\usepackage{array}


\newcommand\R{\mathbb R}




\newcommand{\eps}{\varepsilon}

\newcommand{\xt}{\bold{x_T}}
\newcommand{\nn}{\bold{n}}
\newcommand{\afrak}{\mathfrak{a}}
\newcommand{\bfrak}{\mathfrak{b}}

\newcommand{\Qfrak}{\mathfrak{Q}}
\newcommand{\Rfrak}{\mathfrak{R}}
\newcommand{\Lfrak}{\mathfrak{L}}
\newcommand{\Scal}{\mathcal{S}}

\newtheorem{prop}{Proposition}

\newtheorem{remark}{Remark}

\definecolor{myblue}{rgb}{0.0,0.57,0.81}
\definecolor{myred}{rgb}{0.86,0,0.17}
\definecolor{mygreen}{rgb}{0,0.58,0}
\definecolor{mygray}{rgb}{0.3,0.3,0.3}
\definecolor{myyellow}{rgb}{1,0.84,0.024}
\definecolor{mycyan}{rgb}{0.19,0.835,0.87}
\definecolor{mypurple}{rgb}{0.635,0.055,0.67}
\definecolor{myorange}{rgb}{0.86,0.44,0.145}

\usepackage[normalem]{ulem}
\usepackage{marvosym}

\newcommand{\pfrak}{\mathfrak{p}}
\newcommand{\Ocal}{\mathcal{O}}

\newcommand{\bfu}{\bold{u}}
\newcommand{\bfv}{\bold{v}}
\newcommand{\bfq}{\boldsymbol{\phi}}

\newcommand{\drm}{\mathrm{d}}

\newcommand{\qfrak}{\mathfrak{q}}
\newcommand{\Acal}{\mathcal{A}}

\newcommand{\matE}{\underline{\mathcal{E}}}

\newcommand{\matEt}{\widetilde{\underline{\mathcal{E}}}}

\begin{document}

\title{Asymptotic analysis of a biphase tumor fluid flow. The weak coupling case.}
\author{Cristina Vaghi, Sebastien Benzekry, Clair Poignard\\
 Team MONC, Inria, Institut de Math\'ematiques de Bordeaux, CNRS, Bordeaux INP, Univ. Bordeaux, France}
\begin{abstract}
The aim of this paper is to investigate the asymptotic behavior of a biphase tumor fluid flow derived by 2-scale homogenisation techniques in recent works.
This biphase fluid flow model accounts for the capillary wall permeability, and the interstitial avascular phase, both being mixed in the limit homogenised problem. When the vessel walls become more permeable, we show that the biphase fluid flow exhibits a boundary layer that makes the computation of the full problem costly and unstable. In the limit, both capillary and interstitial pressures coincide except in the vicinity of the boundary where different boundary conditions are applied. Thanks to a rigorous asymptotic analysis, we prove that the solution to the full problem can be approached at any order of approximation by a monophasic model with appropriate boundary conditions on the tumor boundary and appropriate correcting terms near the boundary are given. Numerical simulations in spherical geometry illustrate the theoretical results.
\end{abstract}
\maketitle
\section{Introduction}
\subsection{Motivation}
Drug delivery in tumors is affected by the fluid flow phenomena that occur within the malignant tissues, which include blood flow, interstitial convection and transvascular transport \cite{baxter_transport_1989}. An efficient quantification of these processes is of great importance to evaluate the drug penetration within the tumor site.

Malignant tissues differ from normal tissues for several aspects \cite{chauhan_delivery_2011}. Neoplastic vasculature is unevenly distributed, leaving avascular spaces \cite{sevick_viscous_1989} and vessel walls are leaky and highly permeable \cite{dvorak_vascular_1995}. Furthermore, the tumor interstitial matrix is dense and heterogeneous \cite{jain_transport_1987}. These features lead to an elevated interstitial fluid pressure (IFP) at the center of the tumor with a sharp drop at the periphery, impacting the transport of fluids and drug \cite{boucher_microvascular_1992}.

Due to the high complexity of the architecture of tumors, two-scale asymptotic analysis has been employed to derive macroscopic models that take into account the microscopic properties of the malignant tissues \cite{shipley_multiscale_2010,penta_multiscale_2015}.
In a recent study, we derived several asymptotic models according to the magnitude of the hydraulic conductivity of the interstitium and of the vessel walls \cite{vaghi_macro-scale_2020}. Under the assumption of a periodic structure of neoplastic tissues, tumors are modeled as double porous media and Darcy's law describes the interstitial fluid flow and the blood transport. The coupling is driven by the permeability of the vessel walls. A biphase model was derived under the assumption of low permeability of the vessel walls, while a monophase model was obtained under the assumption of leaky vessel walls. The former consists of a coupled system of Darcy's equation and it is in agreement with previous findings \cite{shipley_multiscale_2010}. However, the numerical simulations of the biphase model are computationally expensive due to the coupling of two elliptic equations. Indeed, the domain discretization has to be thick enough to catch the sharp pressure gradient that occurs at the periphery of the tumor. 

Here, we prove that the biphase model can be approximated by the monophase model far from the boundary for large values of the hydraulic conductivity of the vessel walls. Moreover, boundary layer correctors can describe precisely the behavior of the biphase model solutions in a simple and computationally efficient way. 

Our methodology can be applied to efficiently simulate the fluid transport in tumors, which might give insights in the drug delivery process in malignant tissues and can be applied to optimize treatments.

\subsection{Model statement and objectives}

Given two Dirichlet boundary conditions $\pi_t$ and $\pi_c$ belonging to $H^{s+1/2}(\partial\Omega)$ with $s\geq0$ as large as necessary, we are interested in the analysis of the model derived by Chapman, Shipley {\it et al}~\cite{Shipley2010} also derived recently in \cite{vaghi_macro-scale_2020}. In this paper we focus on the case where the porosity tensors involved in~\cite{Shipley2010,vaghi_macro-scale_2020}  are colinear. After simple change of notation, the problem reads in the smooth domain $\Omega$ as
\begin{subequations}\label{eq:pbp}
\begin{align}
&-\nabla\cdot(\matE\nabla p^\eps_t)+\frac{\alpha^2}{\eps^2}(p^\eps_t-p^\eps_c)=0,\\
&-\nabla\cdot(\matE\nabla p^\eps_c)-\frac{\beta^2}{\eps^2}(p^\eps_t-p^\eps_c)=0,\intertext{with Dirichlet boundary conditions:}
&p^\eps_t|_{\partial \Omega}=\pi_t, \quad p^\eps_c|_{\partial\Omega}=\pi_c ,\qquad \text{on $\partial\Omega$.}
\end{align}
\end{subequations}
The tensor $\matE$ is a  positive definite  tensor satisfying
\begin{align}\matE_{\min}|X|^2\leq X^T\matE X\leq\matE_{\max}|X|^2,\qquad \forall X\in\R^3,\label{eq:coercivKE}
\end{align}
and $\alpha/\eps$ and $\beta/\eps$ are two positive constants that account for the permeability of the capillaries, the volume fraction of the capillary and the interstitium media, $\eps$ being a nondimension small parameter. 
\begin{remark}[The weakly coupled case]
Problem~\eqref{eq:pbp} is the case where the two phases are weakly coupled. Indeed, performing the change of unknowns 
$$q^\eps=(p^\eps_t-p^\eps_c)/2,\qquad p^\eps=(p^\eps_t+p^\eps_c)/2,$$
Problem~\eqref{eq:pbp} is equivalent to find $(p^\eps,q^\eps)$ such that 
\begin{subequations}\label{eq:pbpq}
\begin{align}
&-\nabla\cdot(\matE\nabla q^\eps)+\frac{\alpha^2+\beta^2}{\eps^2}q^\eps=0,\label{eq:pbqeps}
\\
&-\nabla\cdot(\matE\nabla p^\eps)=-\frac{\alpha^2-\beta^2}{\eps^2}q^\eps,\label{eq:pbpeps}\\
&q^\eps|_{\partial \Omega}=\frac{\pi_t-\pi_c}{2}, \quad p^\eps|_{\partial\Omega}=\frac{\pi_t+\pi_c}{2} ,\qquad \text{on $\partial\Omega$,}
\end{align}
\end{subequations}
hence $q^\eps$ is entirely determined by its boundary condition, while $p^\eps$ involves $q^\eps$.

To our opinion, the weakly couple case contains sufficient technical results, and describes already a lot of applications (for instance for scalar tensors  \cite{baxter_transport_1989}) to justify the present paper.
\end{remark}
We are interested in the asymptotic regime $\eps$ tending to 0, which corresponds to leaky vessel walls.
In particular, we show that $p^\eps_t-p^\eps_c$ decays exponentially fast from the domain boundary, making appear a typical skin depth effect on the pressure difference. In addition we show that  as $\eps$ goes 0, Problem~\eqref{eq:pbp} can be approached by the solution $\tilde{P}$ to the following monophase Laplace problem with well-designed boundary condition:
\begin{subequations}\label{eq:pbPtilde}
\begin{align}
&-\nabla\cdot(\matE\nabla \tilde{P})=0,
\\
&\tilde{P}|_{\partial\Omega}=\frac{\pi_t+\pi_c}{2}-\frac{\alpha^2-\beta^2}{\alpha^2+\beta^2} \frac{\pi_t-\pi_c}{2},\qquad \text{on $\partial\Omega$.}
\end{align}
\end{subequations}
The next section  is devoted to the prove the well-posedness of the problem, and the preliminary estimates of the solution $(q^\eps,p^\eps)$. In particular, we prove the exponential decay from the boundary of $q^\eps$. In section~\ref{sec:Asymp}, the asymptotic expansion of the solution at any order of $(q^\eps,p^\eps)$ and optimal error estimates are given, proving the efficacy of the methodology. Numerical simulations illustrate the theoretical results  in the last Section~\label{sec:Num}. The highly coupled case, where the porosity tensors are not colinear for  $q^\eps$ qnd $p^\eps$ will be treated in a forthcoming work. 

\section{Well-posedness and a priori estimates}

\begin{prop}\label{prop:estim0}Let $s\geq0$ and let $\pi_t$ and $\pi_c$ belongs to $H^{1/2+s}(\partial\Omega)$, there exists a unique solution $(p^\eps_t,p^\eps_c)$ to Problem \eqref{eq:pbp} in $(H^{1+s}(\Omega))^2$. In addition, there exists a constant $C$ independent of $\eps$ such that 
\begin{subequations}\label{eq:estim}
\begin{align}
\label{eq:estima}&\|p^\eps_t-p^\eps_c\|_{L^2(\Omega)}\leq C\left(\|\pi_t\|_{H^{1/2}(\partial\Omega)}+\|\pi_c\|_{H^{1/2}(\partial\Omega)}\right)\\
\label{eq:estimb}&\|p^\eps_t\|_{H^1(\Omega)}+\| p^\eps_c\|_{H^1(\Omega)}\leq \frac{C}{\eps}\left(\|\pi_t\|_{H^{1/2}(\partial\Omega)}+\|\pi_c\|_{H^{1/2}(\partial\Omega)}\right).
\end{align}
\end{subequations}
\end{prop}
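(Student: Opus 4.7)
The plan is to exploit the decoupled formulation~\eqref{eq:pbpq}: solve first for $q^\eps$ by standard elliptic theory, then for $p^\eps$ with $q^\eps$ playing the role of a forcing; existence, uniqueness and translation back to $(p^\eps_t,p^\eps_c)$ via $p^\eps_t=p^\eps+q^\eps$, $p^\eps_c=p^\eps-q^\eps$ are then immediate, so only the two scalar problems need to be handled.

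For the $q^\eps$-equation I would lift the Dirichlet datum by an $\matE$-harmonic extension $\tilde q\in H^1(\Omega)$ of $(\pi_t-\pi_c)/2$, which satisfies $\|\tilde q\|_{H^1(\Omega)}\leq C\|\pi_t-\pi_c\|_{H^{1/2}(\partial\Omega)}$ by classical elliptic theory. The shifted unknown $w=q^\eps-\tilde q\in H^1_0(\Omega)$ obeys
\begin{equation*}
-\nabla\cdot(\matE\nabla w)+\tfrac{\alpha^2+\beta^2}{\eps^2}w \;=\; -\tfrac{\alpha^2+\beta^2}{\eps^2}\tilde q,
\end{equation*}
and the associated bilinear form is coercive on $H^1_0(\Omega)$ uniformly in $\eps$, so Lax--Milgram gives well-posedness. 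Testing with $w$ and handling the right-hand side by Young's inequality so as to absorb half of the $\eps^{-2}$ mass term yields $\|w\|_{L^2}\leq\|\tilde q\|_{L^2}$, hence $\|q^\eps\|_{L^2}\leq C\|\pi_t-\pi_c\|_{H^{1/2}(\partial\Omega)}$, which is~\eqref{eq:estima}. The same test also produces the $\eps^{-2}$-weighted gradient bound $\matE_{\min}\|\nabla w\|_{L^2}^2\leq\tfrac{\alpha^2+\beta^2}{2\eps^2}\|\tilde q\|_{L^2}^2$, i.e.\ $\|q^\eps\|_{H^1}\leq C\eps^{-1}\|\pi_t-\pi_c\|_{H^{1/2}(\partial\Omega)}$, which covers the $q$-part of~\eqref{eq:estimb}.

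For the $p^\eps$-equation I would write $p^\eps=\tilde p+u$ with $\tilde p\in H^1(\Omega)$ a lifting of $(\pi_t+\pi_c)/2$ and $u\in H^1_0(\Omega)$; Lax--Milgram again yields existence. The key observation for the uniform-in-$\eps$ $H^1$ bound is that, although the source $(\alpha^2-\beta^2)q^\eps/\eps^2$ is a priori only $O(\eps^{-2})$ in $L^2$, the weak form of~\eqref{eq:pbqeps} tested against any $\varphi\in H^1_0(\Omega)$ gives
\begin{equation*}
\tfrac{\alpha^2+\beta^2}{\eps^2}\int_\Omega q^\eps\,\varphi\,dx \;=\; \int_\Omega\matE\nabla q^\eps\cdot\nabla\varphi\,dx.
\end{equation*}
Using this identity with $\varphi=u$ in the weak formulation for $u$ transfers one derivative onto $q^\eps$ and reduces the estimate to $\|\nabla u\|_{L^2}\leq C(\|\nabla q^\eps\|_{L^2}+\|\nabla\tilde p\|_{L^2})\leq C\eps^{-1}$, closing~\eqref{eq:estimb}. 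The higher regularity $H^{1+s}$ for $s>0$ then follows from standard elliptic regularity in the smooth domain $\Omega$, applied first to the scalar equation for $q^\eps$ (whose zero-order term has the right sign and is constant) and afterwards to the one for $p^\eps$, whose right-hand side inherits regularity from $q^\eps$.

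The main obstacle is estimate~\eqref{eq:estima}: a naive energy argument based on $H^1$-coercivity and Poincar\'e would only deliver $\|q^\eps\|_{L^2}\leq C\eps^{-1}$, which would propagate to a $C\eps^{-2}$ bound on $p^\eps$ and miss the correct scaling. Keeping the $\eps^{-2}$ mass term active and using Young's inequality to absorb it against $\tilde q$ rather than against $w$ is what produces the correct uniform-in-$\eps$ $L^2$ bound, and it is precisely this bound that makes the derivative-transfer argument in the $p^\eps$-equation close with the optimal rate $\eps^{-1}$.
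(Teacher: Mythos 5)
Your argument is correct, but it takes a genuinely different route from the paper's. The paper never diagonalizes at this stage: it lifts both Dirichlet data by $\matE$-harmonic extensions $f_t,f_c$ and keeps the coupled system for $(\phi^\eps_t,\phi^\eps_c)=(p^\eps_t-f_t,p^\eps_c-f_c)$, applying Lax--Milgram to the weighted bilinear form $\Acal_\eps(\bfu,\bfv)=\beta^2\int_\Omega\matE\nabla u_1\cdot\nabla v_1\,dx+\alpha^2\int_\Omega\matE\nabla u_2\cdot\nabla v_2\,dx+\tfrac{\alpha^2\beta^2}{\eps^2}\int_\Omega(u_1-u_2)(v_1-v_2)\,dx$; the weights $\beta^2,\alpha^2$ symmetrize the coupling, so a single test with the solution plus a Young absorption yields simultaneously the uniform $L^2$ bound on $p^\eps_t-p^\eps_c$ and the $\eps^{-1}$ gradient bound, with no need for any derivative transfer. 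You instead work with the decoupled system \eqref{eq:pbpq}: your treatment of $q^\eps$ is the same absorption argument as the paper's (just written for one scalar equation), while for $p^\eps$ you need the extra observation that testing \eqref{eq:pbqeps} against $u\in H^1_0(\Omega)$ rewrites the $O(\eps^{-2})$ source as $-\tfrac{\alpha^2-\beta^2}{\alpha^2+\beta^2}\int_\Omega\matE\nabla q^\eps\cdot\nabla u\,dx$, which is $O(\eps^{-1})\|\nabla u\|_{L^2}$; this closes \eqref{eq:estimb} and makes very transparent why the triangular structure saves exactly one power of $\eps$. Two remarks: the identity you quote is missing a minus sign (the weak form gives $\tfrac{\alpha^2+\beta^2}{\eps^2}\int_\Omega q^\eps\varphi\,dx=-\int_\Omega\matE\nabla q^\eps\cdot\nabla\varphi\,dx$), harmless since only the modulus enters the estimate; and the paper's weighted-form argument does not use the diagonalization at all, so it is the one that carries over to the strongly coupled (non-colinear) case announced in the introduction, whereas your route exploits the weak-coupling structure --- in return, your derivative-transfer identity is essentially the exact relation $p^\eps=\tilde P+\tfrac{\alpha^2-\beta^2}{\alpha^2+\beta^2}\,q^\eps$ with $\tilde P$ solving \eqref{eq:pbPtilde}, i.e.\ the structural fact behind Proposition~\ref{prop:pfrakk}. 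Your appeal to standard elliptic regularity for the $H^{1+s}$ statement matches the paper's.
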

\begin{proof}
The uniqueness of the solution is obvious and left to the reader. Thanks to standard elliptic regularity~\cite{EvansBook,GilbargBook}, one just has to prove the well-posedness in $(H^{1}(\Omega))^2$.
To prove the existence, denote by $f_t$ ({\it resp.} $f_c$) a lift of $\pi_t$ ($\pi_c$ {\it resp.}) which belongs to $H^{1+s}(\Omega)$ defined by 
\begin{align*}
&-\nabla\cdot(\matE\nabla f_t)=0,\quad \text{in $\Omega$}\\
&-\nabla\cdot(\matE\nabla f_c)=0,\quad \text{in $\Omega$}\\
&f_t|_{\partial\Omega}=\pi_t,\quad f_c|_{\partial\Omega}=\pi_c.
\end{align*}
It is well-known that there exists a constant $C$ such that 
\begin{align}
\label{estimEF}
\|f_t\|_{H^1(\Omega)}\leq C\|\pi_t\|_{H^{1/2}(\partial\Omega)},\qquad \|f_c\|_{H^1(\Omega)}\leq C\|\pi_c\|_{H^{1/2}(\partial\Omega)},
\end{align}

 Then, $(p^\eps_t,p^\eps_c)$ reads
\begin{align*}
(p^\eps_t,p^\eps_c)=(f_t+\phi^\eps_t,f_c+\phi^\eps_c),\end{align*}
where the couple $(\phi^\eps_t,\phi^\eps_c)$ satisfy
\begin{align*}
&-\nabla\cdot(\matE\nabla \phi^\eps_t)+\frac{\alpha^2}{\eps^2}(\phi^\eps_t-\phi^\eps_c)=-\frac{\alpha^2}{\eps^2}(f_t-f_c)\\
&-\nabla\cdot(\matE\nabla \phi^\eps_c)-\frac{\beta^2}{\eps^2}(\phi^\eps_t-\phi^\eps_c)=\frac{\beta^2}{\eps^2}(f_t-f_c),\intertext{with homogeneous Dirichlet condition:}
&\phi^\eps_t|_{\partial \Omega}=0=\phi^\eps_c|_{\partial\Omega},\qquad \text{on $\partial\Omega$,}
\end{align*}
Consider the following bilinear form $\Acal_\eps$ defined on $(H^1_0(\Omega))^2$ by:
\begin{align*}
\notag& \forall (\bfu,\bfv)=((u_1,u_2),(v_1,v_2))\in(H^1_0(\Omega))^2\times(H^1_0(\Omega))^2,
\\
&\Acal_\eps(\bfu,\bfv)=\beta^2\int_{\Omega} \matE\nabla u_1\nabla v_1 dx +\alpha^2\int_{\Omega} \matE\nabla u_2\nabla v_2 dx+\frac{\alpha^2\beta^2}{\eps^2}\int_{\Omega}(u_1-u_2)(v_1-v_2)dx.
\end{align*}

Thanks to Poincar\'e inequality, since $\matE$ is coercive by~\eqref{eq:coercivKE}, the bilinear form $\Acal$ is continuous and coercive on $(H^1_0(\Omega))^2$, and  the coercivity constant does not depend on $\eps$. Therefore there exists a unique solution $\bfq^\eps$ satisfying for any $\bfv\in(H^1_0(\Omega))^2$:
\begin{align}\label{eq:varf}
\Acal_\eps(\bfq^\eps,\bfv)=-\frac{\alpha^2\beta^2}{\eps^2}\int_{\Omega}(f_t-f_c)(v_1-v_2)dx.
\end{align}

To prove the {\it a priori} estimates, thanks to Dirichlet and Neumann trace theorems, one just has to show the inequalities on $\bfq^\eps$. Taking $\bfv=\bfq^\eps$ in \eqref{eq:varf}, one infers 
\begin{align*}
\begin{split}
\beta^2\matE_{\min}\|\nabla \phi^\eps_t\|^2_{L^2(\Omega)}+\alpha^2\matE_{\min}\|\nabla \phi^\eps_c\|^2_{L^2(\Omega)}+\frac{\alpha^2\beta^2}{\eps^2}\|\phi^\eps_t-\phi^\eps_c\|^2_{L^2(\Omega)}&\leq \frac{\alpha^2\beta^2}{\eps^2}\|f_t-f_c\|_{L^2(\Omega)} \|\phi^\eps_t-\phi^\eps_c\|_{L^2(\Omega)}.
\end{split}
\end{align*}
Then one infers successively that for a constant $C$ independent of $\eps$
$$\|\phi^\eps_t-\phi^\eps_c\|_{L^2(\Omega)}\leq C \left(\|\pi_t\|_{H^{1/2}(\partial\Omega)}+\|\pi_c\|_{H^{1/2}(\partial\Omega)}\right),$$
and
$$\|\nabla \phi^\eps_t\|_{L^2(\Omega)}+\|\nabla \phi^\eps_c\|_{L^2(\Omega)}
\leq \frac{C}{\eps}\left(\|\pi_t\|_{H^{1/2}(\partial\Omega)}+\|\pi_c\|_{H^{1/2}(\partial\Omega)}\right).$$

Estimates \eqref{eq:estima}--\eqref{eq:estimb} fall then easily thanks to \eqref{estimEF}  since $(p^\eps_t,p^\eps_c)=(f_t+\phi^\eps_t,f_c+\phi^\eps_c)$. \end{proof}
\begin{prop}\label{prop:expodec}
For any $d>0$, denote by $\Omega_d$ the inner domain defined by $\Omega_d=\left\{x\in\Omega:\, {\rm dist}(x,\partial\Omega)>d\right\}$.
There exists $\eps_0>$, $C_d$ and $\mu>0$ such that for any $\eps<\eps_0$,
\begin{align*}
\|p^\eps_t-p^\eps_c\|_{H^1(\Omega_d)}\leq C_de^{-\mu/\eps}\left(\|\pi_t\|_{H^{1/2}(\partial\Omega)}+\|\pi_c\|_{H^{1/2}(\partial\Omega)}\right).
\end{align*}
\end{prop}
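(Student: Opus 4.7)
\medskip
\noindent\textbf{Proof plan.} My plan is to derive an Agmon-type weighted energy estimate for $q^\eps := (p^\eps_t - p^\eps_c)/2$, which by \eqref{eq:pbqeps} satisfies
\begin{equation*}
-\nabla\cdot(\matE\nabla q^\eps) + \frac{\alpha^2+\beta^2}{\eps^2}\, q^\eps = 0 \quad \text{in } \Omega, \qquad q^\eps|_{\partial\Omega} = \frac{\pi_t - \pi_c}{2}.
\end{equation*}
First I would fix a smooth weight $\rho \in C^\infty(\overline{\Omega})$ with $\rho|_{\partial\Omega} = 0$, $\rho > 0$ in $\Omega$, $\|\nabla\rho\|_{L^\infty(\Omega)} \leq 1$, and $\rho \geq d$ on $\Omega_d$; such a $\rho$ is obtained by mollifying the distance-to-boundary function. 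The test function of interest is then $e^{2\mu\rho/\eps} q^\eps$, where $\mu>0$ is to be chosen small in terms of $\matE_{\max}$, $\alpha$, $\beta$.

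Multiplying \eqref{eq:pbqeps} by this test function and integrating by parts, and using $e^{2\mu\rho/\eps}\equiv 1$ on $\partial\Omega$, one gets
\begin{equation*}
\int_\Omega \matE\nabla q^\eps \cdot \nabla(e^{2\mu\rho/\eps} q^\eps)\,dx + \frac{\alpha^2+\beta^2}{\eps^2}\int_\Omega e^{2\mu\rho/\eps}(q^\eps)^2\,dx = \int_{\partial\Omega}(\matE\nabla q^\eps\cdot\normal)\,q^\eps\,d\sigma.
\end{equation*}
Setting $w^\eps := e^{\mu\rho/\eps}\, q^\eps$, a direct expansion in which the cross terms cancel rewrites the left-hand side as
\begin{equation*}
\int_\Omega \matE\nabla w^\eps \cdot \nabla w^\eps\,dx + \frac{1}{\eps^2}\int_\Omega\!\bigl(\alpha^2+\beta^2 - \mu^2\, \matE\nabla\rho\cdot\nabla\rho\bigr)(w^\eps)^2\,dx.
\end{equation*}
Choosing $\mu$ so that $\mu^2 \matE_{\max} \leq (\alpha^2+\beta^2)/2$, the zeroth-order coefficient is bounded below by $(\alpha^2+\beta^2)/(2\eps^2)$.

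For the right-hand side, applying the unweighted Green identity to \eqref{eq:pbqeps} and invoking the a priori bounds \eqref{eq:estima}--\eqref{eq:estimb} of Proposition~\ref{prop:estim0},
\begin{equation*}
\int_{\partial\Omega}(\matE\nabla q^\eps\cdot\normal)\,q^\eps\,d\sigma = \int_\Omega \matE\nabla q^\eps\cdot\nabla q^\eps\,dx + \frac{\alpha^2+\beta^2}{\eps^2}\int_\Omega (q^\eps)^2\,dx \leq \frac{C}{\eps^2}\bigl(\|\pi_t\|_{H^{1/2}(\partial\Omega)} + \|\pi_c\|_{H^{1/2}(\partial\Omega)}\bigr)^2.
\end{equation*}
Combining the previous two displays and using coercivity of $\matE$ yields $\|w^\eps\|_{L^2(\Omega)} \leq C(\|\pi_t\|_{H^{1/2}} + \|\pi_c\|_{H^{1/2}})$ and $\|\nabla w^\eps\|_{L^2(\Omega)} \leq (C/\eps)(\|\pi_t\|_{H^{1/2}} + \|\pi_c\|_{H^{1/2}})$. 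Undoing the weight on $\Omega_d$ (where $\rho \geq d$) gives
\begin{equation*}
\|q^\eps\|_{L^2(\Omega_d)} \leq e^{-\mu d/\eps}\|w^\eps\|_{L^2(\Omega)}, \qquad \|\nabla q^\eps\|_{L^2(\Omega_d)} \leq e^{-\mu d/\eps}\bigl(\|\nabla w^\eps\|_{L^2(\Omega)} + (\mu/\eps)\|w^\eps\|_{L^2(\Omega)}\bigr),
\end{equation*}
and the residual prefactor $1/\eps$ in the gradient estimate is absorbed by replacing $\mu$ with any strictly smaller constant $\mu'<\mu$, at the cost of enlarging the constant for $\eps$ small enough. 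This yields the stated estimate for $p^\eps_t - p^\eps_c = 2q^\eps$.

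The step I expect to be the main obstacle is the control of the boundary integral $\int_{\partial\Omega}(\matE\nabla q^\eps\cdot\normal)q^\eps\,d\sigma$. A naive normal-trace bound in $H^{-1/2}$ combined with the $H^1$ estimate \eqref{eq:estimb} produces the wrong power of $\eps$. The trick is to avoid trace analysis altogether and reabsorb this boundary integral via the Green identity applied to the PDE itself; the resulting $1/\eps^2$ blow-up is exactly what the weighted Agmon identity can afford, being swamped by $e^{-\mu d/\eps}$.
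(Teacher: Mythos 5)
Your proof is correct and follows essentially the same route as the paper: an Agmon-type weighted (exponential) energy estimate, with the boundary term controlled at order $\eps^{-2}$ via the a priori bounds of Proposition~\ref{prop:estim0} and the residual $1/\eps$ prefactor absorbed by slightly shrinking $\mu$; the only structural difference is that the paper's weight vanishes identically in a whole neighbourhood of $\partial\Omega$ (which is why its first-order term can be integrated by parts with no boundary contribution), whereas yours vanishes only on $\partial\Omega$, a cosmetic variation. One caveat: your claim that the cross terms cancel exactly holds only if $\matE$ is symmetric, which the coercivity assumption \eqref{eq:coercivKE} does not impose; for nonsymmetric $\matE$ the leftover term $\frac{\mu}{\eps}\int_\Omega w^\eps\,\bigl((\matE-\matE^T)\nabla w^\eps\bigr)\cdot\nabla\rho\,dx$ must still be absorbed, e.g.\ by Cauchy--Schwarz and a further reduction of $\mu$, or treated as in the paper, whose proof keeps the $(\matE+\matE^T)$ and $(\matE-\matE^T)$ contributions explicitly.
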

\begin{proof}
The proof of the proposition is very similar to the proof of Haddar, Joly, Nguyen in~\cite{Haddar2005} even though the problem is slightly different. We  recall here the main ideas  for the self-consistency of the paper. 
Let $\phi$ be a smooth non negative function of $\Omega$ such that 
$$\phi(x)=\begin{cases}0,\quad \text{if  $x\in\Omega\setminus\Omega_{d/2}$},\\
2\mu,\quad \text{if  $x\in\Omega_d$},
\end{cases}
$$
where $\mu$ is a constant that is fixed later on.
Denote by $u^\eps$ the function of $\Omega$ defined by $(p^\eps_t-p^\eps_c)(x)=e^{-\phi(x)/\eps}u^\eps(x)$. It satisfies
\begin{align*}
\begin{split}
&-\nabla\cdot(\matE\nabla u^\eps)+\frac{1}{\eps}\left(\left(\matE+\matE^T\right)\nabla\phi\cdot\nabla u^\eps\right)+\frac{\alpha^2+\beta^2-\matE\nabla\phi\cdot\nabla\phi+\eps\nabla\cdot(\matE\nabla\phi)}{\eps^2}u^\eps=0,\quad\text{in $\Omega$},\\
&u^\eps|_{\partial\Omega}=\pi_t-\pi_c.
\end{split}
\end{align*}
Multiplying by $u^\eps$ and integrating by parts lead to 
\begin{align*}
\int_\Omega\matE\nabla u^\eps\cdot\nabla u^\eps dx+\frac{1}{\eps}\int_\Omega
\left(\left(\matE+\matE^T\right)\nabla\phi\cdot\nabla u^\eps\right)u^\eps dx&+\frac{1}{\eps^2}\int\left(\alpha^2+\beta^2-\matE\nabla\phi\cdot\nabla\phi+\eps\nabla\cdot(\matE\nabla\phi)\right){u^\eps}^2dx\\
&=\int_{\partial\Omega}u^\eps\partial_n u^\eps dx=\int_{\partial\Omega}(p^\eps_t-p^\eps_c)\partial_n (p^\eps_t-p^\eps_c) dx.\end{align*}
Then using the fact that $\phi$ vanishes identically near $\partial\Omega$, one infers
$$\int_\Omega
\left(\left(\matE+\matE^T\right)\nabla\phi\cdot\nabla u^\eps\right)u^\eps dx=-\frac{1}{2}\int_\Omega
\nabla\cdot\left(\left(\matE+\matE^T\right)\nabla\phi\right){u^\eps}^2dx,$$
and Proposition~\ref{prop:estim0} implies that
$$\left|\int_{\partial\Omega}u^\eps\partial_n u^\eps dx\right|=\left|\int_{\partial\Omega}(p^\eps_t-p^\eps_c)\partial_n (p^\eps_t-p^\eps_c) dx\right|\leq\frac{C^2}{\eps^2}\left(\|\pi_t\|_{H^{1/2}(\partial\Omega)}+\|\pi_c\|_{H^{1/2}(\partial\Omega)}\right)^2.$$
Thus one infers the following estimate
\begin{align*}
\int_\Omega\matE\nabla u^\eps\cdot\nabla u^\eps dx+&\frac{1}{\eps^2}\left|\int\left(\alpha^2+\beta^2-\matE\nabla\phi\cdot\nabla\phi+\frac{\eps}{2}\nabla\cdot((\matE-\matE^T)\nabla\phi)\right){u^\eps}^2dx\right|\\
&\leq\frac{C^2}{\eps^2}\left(\|\pi_t\|_{H^{1/2}(\partial\Omega)}+\|\pi_c\|_{H^{1/2}(\partial\Omega)}\right)^2.
\end{align*}
One then just has to choose $\mu$ independent of $\eps$ such that $$\|\matE\nabla\phi\cdot\nabla\phi\|\leq (\alpha^2+\beta^2)/3,$$
to infer that for $\eps$ small enough
$$\left\|(\alpha^2+\beta^2)/2-\matE\nabla\phi.\nabla\phi+\frac{\eps}{2}\nabla\cdot((\matE-\matE^T)\nabla\phi\right\|_{L^{\infty}(\Omega)}\geq \frac{\alpha^2+\beta^2}{2}.$$
Then \begin{align*}\|p^\eps_t-p^\eps_c\|_{H^1(\Omega_d)}=e^{-2\mu/\eps}\|u^\eps\|_{H^1(\Omega)}&\leq \frac{Ce^{-2\mu/\eps}}{\eps}\left(\|\pi_t\|_{H^{1/2}(\partial\Omega)}+\|\pi_c\|_{H^{1/2}(\partial\Omega)}\right)\\&\leq \frac{C}{e\mu}e^{-\mu/\eps}\left(\|\pi_t\|_{H^{1/2}(\partial\Omega)}+\|\pi_c\|_{H^{1/2}(\partial\Omega)}\right).\end{align*}

\end{proof}
\section{Asymptotic analysis}\label{sec:Asymp}
Consider here Problem~\eqref{eq:pbpq} satisfied by $q^\eps=(p^\eps_t-p^\eps_c)/2$ and by $p^\eps=(p^\eps_t+p^\eps_c)/2$:
\begin{align*}&-\nabla\cdot(\matE\nabla q^\eps)+\frac{\alpha^2+\beta^2}{\eps^2}q^\eps=0,\qquad\text{in $\Omega$},\\
&-\nabla\cdot(\matE\nabla p^\eps)+\frac{\alpha^2-\beta^2}{\eps^2}q^\eps=0,\qquad\text{in $\Omega$},
\intertext{with Dirichlet boundary conditions:}
&p^\eps|_{\partial \Omega}=(\pi_t+\pi_c)/2, \quad q^\eps|_{\partial\Omega}=(\pi_t-\pi_c)/2 ,\qquad \text{on $\partial\Omega$,}
\end{align*}
\subsection{Geometry}  Let $\drm>0$ be a fixed distance to $\partial\Omega$, such that the tubular neighborhood of $\partial\Omega$ of radius $\drm$ is parameterized by local coordinates. More precisely, let
$\xt=(x_1,x_2)$    be    a   system    of    local   coordinates    on
$\partial\Omega=\left\{\Psi(\xt)\right\}.$
Define the map $\Phi$ by
\begin{align}
\forall (\xt,x_3)\in\partial\Omega\times\R,\quad \Phi(\xt,x_3)=\Psi(\xt)-x_3 \nn(\xt),\label{eq:mapPhi}
\end{align}
where   $\nn$   is   the   normal   vector  of   $\partial\Omega$ outwardly directed.    Then we assume that   $\Ocal_{\drm}$ the tubular neighborhood of $\partial\Omega$ is
parameterized as
$$
\Ocal_{\drm}=\left\{\Phi(\xt,x_3),\quad
  (\xt,x_3)\in\partial\Omega\times(0,\drm)\right\}.
$$
The  Euclidean  metric  in   $(\xt,x_3)$  is  given  by  the  $3\times
3$--matrix                 $(g_{ij})_{i,j=1,2,3}$                where
$g_{ij}=\langle\partial_{i}\Phi,\partial_{j} \Phi\rangle$:
\begin{subequations}
\label{coeffg}
  \begin{align}
  &g_{33}=1,\\
    \forall \alpha\in\{1,2\},\quad  &g_{\alpha 3} = g_{3
      \alpha}    =   0,\\
    \forall(\alpha,\beta)\in\{1,2\}^2, \quad
    &g_{\alpha\beta}(\xt,x_3)=g^0_{\alpha\beta}(\xt)-2x_3b_{\alpha\beta}(\xt)+x_3^2
    c_{\alpha\beta}(\xt), \label{g-alphabeta}%
    \intertext{where} &g^0_{\alpha\beta}=\langle\partial_\alpha
    \Psi,\partial_\beta \Psi\rangle,\quad
    b_{\alpha\beta}=\langle\partial_\alpha \nn,\partial_\beta
    \Psi\rangle,\quad c_{\alpha\beta}=\langle\partial_\alpha
    \nn,\partial_\beta \nn\rangle.
  \end{align}\end{subequations}

\subsection{The operator $\nabla\cdot(\matE\nabla\cdot)$ in local coordinates}

Define  $\matEt$ the matrix $\matE$ written in the new basis $(\partial_i\Phi)_{i=1,2,3}$. In other words, 
\begin{subequations}\label{matEtilde}\begin{align}
 \matEt=P^{-1}\matE P,\intertext{where $P$ is the transfer matrix from the Euclidean basis to $(\partial_i\Phi)_{i=1,2,3}$:}
P=\begin{pmatrix}\partial_1\Phi,\partial_2\Phi,\partial_3\Phi\end{pmatrix}.
\end{align}
\end{subequations}

We denote by  $(g^{ij})$ the inverse matrix of  $(g_{ij})$ defined by \eqref{coeffg}, and by $g$
the determinant of $(g_{ij})$.  For any integer $l\geq0$ define the following geometric coefficients independent of $\eps$:
\begin{align}
  \begin{cases}
    &
    \afrak^l_{ij}=(-1)^l\left.\partial_3^l\left(\dfrac{\partial_i\left(\sqrt{g}g^{ij}
          \right)}{\sqrt{g}}\right)\right|_{x_3=0},
    \quad \text{for $(i,j)\in\{1,2,3\}^2$},\\
    & \bfrak^l_{\alpha\beta}=(-1)^l\left.\partial_3^l\left(g^{\alpha\beta}\right)
    \right|_{x_3=0},\quad \text{for $(\alpha,\beta)\in\{1,2\}^2$},
  \end{cases} \label{aijAl}
\end{align}
and we denote by  $\Scal^l_{\partial\Omega,\matEt}$ the sequence of differential operators  on $\partial\Omega$ of
order 2 defined by
\begin{align}
   \Scal^l_{\partial\Omega,\matEt}=\sum_{\alpha,\beta=1,2}\matEt_{\alpha\beta}
  \left( \afrak^l_{\alpha\beta}\partial_\beta+\bfrak^l_{\alpha\beta}\partial_{\alpha}\partial_\beta\right).
  \label{Scal}
\end{align}

Considering the rescaled local coordinates $(\xt,\rho)=(\xt,x_3/\eps)$, the  operator $\nabla\cdot(\matE\nabla\cdot)$ can be expanded in series of $\eps$ as follows:
\begin{align}
\notag
 \nabla\cdot(\matE\nabla\cdot)&=\frac{1}{\sqrt{g}}
  \sum_{i,j=1,2,3}\partial_{i}\left(\sqrt{g}\matEt_{ij}g^{ij}\partial_{j}\right),\qquad&\forall(\xt,x_3)\in\partial\Omega\times(0,\drm),&&&
 \\
\label{divEgrad}
 &=\frac{\matEt_{33}}{\eps^2}\partial^2_\rho-\frac{\matEt_{33}}{\eps}\afrak^0_{33}(\xt)
 \partial_\rho
+\sum_{\ell\geq0}(-1)^\ell\eps^\ell\frac{\rho^\ell}{\ell!}\Lfrak_{\matEt,\ell},\qquad&
 \forall(\xt,\rho)\in\partial\Omega\times(0,\drm/\eps),&&&
 \end{align}
{where the operators $\Lfrak_{\matEt,\ell}$ are of first order in $\rho$ and of second order in $\xt$, and given for any $\ell\geq0$ by}
\begin{align*}
 \forall l\geq 0,\quad
 \Lfrak_{\matEt,\ell}=\left(\frac{\rho}{\ell+1}{\matEt}_{33}\afrak^{\ell+1}_{33}(\xt)
    \partial_\rho+\Scal^\ell_{\partial\Omega,\matEt}\right),\qquad \forall(\xt,\rho)\in\partial\Omega\times(0,\drm/\eps) .
\end{align*}
We refer to \cite{Perrussel2013} for the detailed calculation of these expansions in the case of Laplace operator. 

%
%

\subsection{Asymptotic expansion of $q^\eps$}

The problem satisfied by $q^\eps$ is similar to the Helmholtz problem in high conductive materials that has been studied extensively in the last decade~\cite{Haddar2005,Dauge2010,Caloz2010,Caloz2011, Dauge2014}, in different context.
We recall here the main results regarding the expansion of $q^\eps$, and the proof is given in the Appendix section~ref{sec:appendix}
 for self consistency of the paper. 

It is important to note that thanks to the properties of  $\matE$ given in~\eqref{eq:coercivKE}, $\matEt_{33}$ is strictly positive.

Denote by $\gamma$ the positive parameter such that
 $$\gamma^2=\frac{\alpha^2 +\beta^2}{\matEt_{33}}, \qquad \gamma>0.$$
The following proposition provides the asymptotic expansion of $q^\eps$:
\begin{prop}\label{prop:estimexpan} Let $\drm>0$, such that the mapping $\Phi$ defined by \eqref{eq:mapPhi} is smooth and one-to-one. Define the smooth cut-off function $\chi_\drm$ equal to 1 in $\Omega\setminus\overline{\Omega_{\drm}}$, whose support is compactly embedded in $\Omega\setminus\overline{\Omega_{2d}}$.  

Assume that $\Omega$ is a smooth domain, and that $\pi_t$ and $\pi_c$ are smooth functions of $\partial\Omega$.

Then for any $k\geq0$, there exists $C_k$ depending on $\pi_t$, $\pi_c$ and their derivatives such that 
 \begin{align*}
\| q^\eps-\sum_{\ell=0}^k\chi_d\eps^\ell\qfrak^\ell(\xt,x_3/\eps)\|_{L^2(\Omega)}\leq C_k\eps^{k+1},
\\
\| q^\eps-\sum_{\ell=0}^k\chi_d\eps^\ell\qfrak^\ell(\xt,x_3/\eps)\|_{H^1(\Omega)}\leq C_k\eps^{k+1/2},
\end{align*}
where the profiles $\qfrak^\ell$ satisfy:
\\
\noindent $\bullet$ For $k=0$:
 \begin{align} \label{eq:k=0}
 \begin{split}
& -\partial^2_\rho \qfrak^0+\gamma^2\qfrak^0=0,\qquad \forall(\xt,\rho)\in\partial\Omega\times(0,+\infty),\\
&\qfrak^0|_{\eta=0}=\frac{1}{2}(\pi_t-\pi_c),\qquad \qfrak^0\rightarrow_{\rho\rightarrow +\infty}=0,
\end{split}
\end{align}
$\bullet$ For $k\geq 1$:
 \begin{align} \label{eq:k}
 \begin{split}
& -\partial^2_\rho \qfrak^k+\gamma^2\qfrak^k=-\frac{\afrak^0_{33}(\xt)}{\matEt_{33}}
 \partial_\rho \qfrak^{k-1}
+\sum_{\ell=0}^{k-2}\frac{(-1)^\ell}{\matEt_{33}}\frac{\rho^\ell}{\ell!}\Lfrak_{\matEt,\ell}(\qfrak^{k-2-\ell}),\qquad \forall(\xt,\rho)\in\partial\Omega\times(0,+\infty), \\
&\qfrak^k|_{\eta=0}=0,\qquad \qfrak^k\rightarrow_{\rho\rightarrow +\infty}=0,
\end{split}
\end{align}
where by convention, $\qfrak^n\equiv0$ for $n\leq 0$.

More precisely, 
\begin{subequations}\label{eq:qfrakk}
 \begin{align}\qfrak^0(\xt,\rho)=\frac{1}{2}(\pi_t-\pi_c)e^{-\gamma\rho},\intertext{and for any $k\geq0$, }
 \qfrak^k(\xt,\rho)= \Qfrak_{k}(\xt,\rho)e^{-\gamma\rho},
 \end{align}
 \end{subequations}
 where $\Qfrak_{k}$ is polynomial of degree $k$ in the variable $\rho$, which vanishes in $\rho=0$ and whose coefficients are smooth functions of  $\partial\Omega$.
\end{prop}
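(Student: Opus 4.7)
The plan is to follow a standard boundary-layer (BKW / matched asymptotics) construction. First, working in the rescaled local coordinates $(\xt, \rho)$ with $\rho = x_3/\eps$, I would inject the formal Ansatz
\[
q^\eps_{\text{f}}(\xt, x_3) = \sum_{\ell \geq 0} \eps^\ell \qfrak^\ell(\xt, x_3/\eps)
\]
into the PDE $-\nabla\cdot(\matE\nabla q^\eps) + \frac{\alpha^2+\beta^2}{\eps^2} q^\eps = 0$, using the operator expansion \eqref{divEgrad}, and identify the coefficient of each power of $\eps$. The order $\eps^{-2}$ forces $-\matEt_{33}\partial_\rho^2 \qfrak^0 + (\alpha^2+\beta^2)\qfrak^0 = 0$ on the half-line, with $\qfrak^0|_{\rho=0} = (\pi_t-\pi_c)/2$ from the boundary datum and $\qfrak^0 \to 0$ as $\rho\to +\infty$ to match the exponential decay from Proposition \ref{prop:expodec}; this selects $\qfrak^0 = \tfrac{1}{2}(\pi_t - \pi_c)e^{-\gamma\rho}$. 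At each order $\eps^{k-2}$ for $k\geq 1$ the same Helmholtz-type operator $-\partial_\rho^2 + \gamma^2$ reappears, acting on $\qfrak^k$, with a source assembled from $\partial_\rho \qfrak^{k-1}$ and from the $\Lfrak_{\matEt,\ell}$-actions on lower profiles, reproducing exactly \eqref{eq:k}; the Dirichlet trace at $\rho=0$ is homogeneous because $\qfrak^0$ alone carries the boundary data.

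The explicit polynomial-times-exponential structure \eqref{eq:qfrakk} would then be obtained by induction on $k$: assuming $\qfrak^j = \Qfrak_j(\xt,\rho)e^{-\gamma\rho}$ with $\deg_\rho \Qfrak_j = j$ and $\Qfrak_j(\xt,0)=0$ for $1\leq j \leq k-1$, the source of \eqref{eq:k} takes the form $\tilde{\Qfrak}_{k-1}(\xt,\rho)e^{-\gamma\rho}$ with $\deg_\rho \tilde{\Qfrak}_{k-1}\leq k-1$. A particular solution of $(-\partial_\rho^2+\gamma^2)u = \tilde{\Qfrak}_{k-1}e^{-\gamma\rho}$ has the form $\Rfrak_k(\xt,\rho)e^{-\gamma\rho}$ with $\deg_\rho \Rfrak_k = k$ -- the polynomial degree is raised by one because $-\gamma$ is a root of the characteristic polynomial $X^2-\gamma^2$ -- and the free $\xt$-dependent prefactor of the homogeneous decaying mode $e^{-\gamma\rho}$ is fixed by imposing $\qfrak^k|_{\rho=0} = 0$. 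Smoothness in $\xt$ is inherited from the metric coefficients \eqref{coeffg} and from $\pi_t,\pi_c$.

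For the error estimate, set
\[
\tilde{q}^\eps_k = \chi_d \sum_{\ell=0}^k \eps^\ell \qfrak^\ell(\xt, x_3/\eps), \qquad r^\eps = q^\eps - \tilde{q}^\eps_k.
\]
By construction $r^\eps$ vanishes on $\partial\Omega$ (since $\chi_d\equiv 1$ near the boundary and $\qfrak^0|_{\rho=0}$ matches the data), and a direct substitution using \eqref{divEgrad} shows
\[
-\nabla\cdot(\matE\nabla r^\eps) + \frac{\alpha^2+\beta^2}{\eps^2} r^\eps = R^\eps_{\text{tail}} + R^\eps_{\text{cut}},
\]
where $R^\eps_{\text{tail}}$ collects the coefficients of $\eps^n$ for $n\geq k-1$ in the formal expansion (these would cancel if the cascade were continued with $\qfrak^{k+1},\qfrak^{k+2},\ldots$), while $R^\eps_{\text{cut}}$ is the commutator of $\nabla\cdot(\matE\nabla\cdot)$ with $\chi_d$. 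Using \eqref{eq:qfrakk} and the change of variables $\rho = x_3/\eps$ yields $\|\qfrak^\ell(\cdot,\cdot/\eps)\|_{L^2(\Omega)} = O(\eps^{1/2})$, hence $\|R^\eps_{\text{tail}}\|_{L^2(\Omega)} = O(\eps^{k-1/2})$; on the support of $\nabla\chi_d$, $x_3\geq d$ implies $|\qfrak^\ell(\xt, x_3/\eps)|\leq C e^{-\gamma d/\eps}$, so $R^\eps_{\text{cut}}$ is exponentially small in $\eps$. Testing the equation for $r^\eps$ against $r^\eps$ and using coercivity \eqref{eq:coercivKE} gives
\[
\matE_{\min}\|\nabla r^\eps\|_{L^2(\Omega)}^2 + \frac{\alpha^2+\beta^2}{\eps^2}\|r^\eps\|_{L^2(\Omega)}^2 \leq \|R^\eps\|_{L^2(\Omega)}\|r^\eps\|_{L^2(\Omega)},
\]
from which $\|r^\eps\|_{L^2(\Omega)}\leq C\eps^2\|R^\eps\|_{L^2(\Omega)}$ and $\|\nabla r^\eps\|_{L^2(\Omega)}\leq C\eps\|R^\eps\|_{L^2(\Omega)}$, delivering the announced rates $O(\eps^{k+3/2})\subset O(\eps^{k+1})$ in $L^2$ and $O(\eps^{k+1/2})$ in $H^1$.

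The main obstacle is the combinatorial bookkeeping of the cascade: one has to verify that for every $k$ the right-hand side of \eqref{eq:k} remains in the class $\tilde{\Qfrak}(\xt,\rho)e^{-\gamma\rho}$ with controlled polynomial degree, and one has to organise the operator expansion \eqref{divEgrad} -- itself an infinite asymptotic series -- so that its finite-order truncation inside the cut-off region (where $\eps\rho = x_3 \leq 2d$) can be absorbed into $R^\eps_{\text{tail}}$ without spoiling the $\eps^{k-1/2}$ bound. Once these two points are under control, the weighted energy estimate, which benefits crucially from the $1/\eps^2$ zero-order term in \eqref{eq:pbqeps}, yields the conclusion almost immediately.
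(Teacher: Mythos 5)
Your proposal follows essentially the same route as the paper: the same Ansatz in the rescaled local coordinates via \eqref{divEgrad}, the same cascade \eqref{eq:k=0}--\eqref{eq:k} with decay at $\rho\to+\infty$ justified by Proposition~\ref{prop:expodec}, the same induction showing the right-hand side stays of the form (polynomial in $\rho$)$\,e^{-\gamma\rho}$ with the degree raised by one because $-\gamma$ is a characteristic root, and the same treatment of the cut-off commutator as exponentially small. The only (harmless) deviation is in closing the error estimate: you exploit the $\eps^{-2}$ zero-order term in the weighted energy inequality to get $\|r^\eps\|_{L^2}\leq C\eps^2\|R^\eps\|_{L^2}$ and hence the sharp rates directly (even $\eps^{k+3/2}$ in $L^2$), whereas the paper first derives the cruder bound $C_n\eps^{n-1/2}$ in $H^1$ and recovers optimality by truncating the expansion two orders higher and estimating the norms of $\qfrak^{k+1},\qfrak^{k+2}$; both arguments are valid and rest on the same residual bound $O(\eps^{k-1/2})$.
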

\begin{proof}
Problem~\eqref{eq:pbqeps} reads in local coordinates in the vicinity of $\partial\Omega$ as follows:
\begin{subequations}\label{eq:q}\begin{align}
&\notag\forall(\xt,\rho)\in\partial\Omega\times(0,\drm/\eps),\\
&-\partial^2_\rho q^\eps+\gamma^2q^\eps+\eps \frac{\afrak^0_{33}(\xt)}{\matEt_{33}}
 \partial_\rho q^\eps
-\sum_{\ell\geq0}\frac{(-1)^\ell}{\matEt_{33}}\eps^{\ell+2}\dfrac{\rho^\ell}{\ell!}\Lfrak_{\matEt,\ell}q^\eps=0,\\
&q^\eps|_{\rho=0}=\frac{1}{2}(\pi_t-\pi_c).
 \end{align}
  \end{subequations}
 
\noindent$\bullet$~{Formal expansion.}

  Set the Ansatz 
 $$q^\eps(x)=\chi_d(x)\sum_{k\geq0} \eps^k \qfrak^k\circ\Phi^{-1}(x),\quad\forall x\in \Omega,$$
where $\qfrak^k$ are defined in $\partial\Omega\times(0,+\infty)$.

By Proposition~\ref{prop:expodec}, the exponential decay to 0 of $q^\eps$ implies that the above ansatz is consistent in $\Omega_\drm$.

 Identifying the terms with the same power in $\eps^k$ implies that the coefficients of the expansions $\qfrak^k$ satisfy the following inductive elementary problems.
 
\noindent $\bullet$ For $k=0$:
 \begin{align*} 
 \begin{split}
& -\partial^2_\rho \qfrak^0+\gamma^2\qfrak^0=0,\qquad \forall(\xt,\rho)\in\partial\Omega\times(0,+\infty),\\
&\qfrak^0|_{\eta=0}=\frac{1}{2}(\pi_t-\pi_c),\qquad \qfrak^0\rightarrow_{\rho\rightarrow +\infty}=0,
\end{split}
\end{align*}
$\bullet$ For $k\geq 1$:
 \begin{align*} 
 \begin{split}
& -\partial^2_\rho \qfrak^k+\gamma^2\qfrak^k=-\frac{\afrak^0_{33}(\xt)}{\matEt_{33}}
 \partial_\rho \qfrak^{k-1}
+\sum_{\ell=0}^{k-2}\frac{(-1)^\ell}{\matEt_{33}}\frac{\rho^\ell}{\ell!}\Lfrak_{\matEt,\ell}(\qfrak^{k-2-\ell}),\qquad \forall(\xt,\rho)\in\partial\Omega\times(0,+\infty), \\
&\qfrak^k|_{\eta=0}=0,\qquad \qfrak^k\rightarrow_{\rho\rightarrow +\infty}=0,
\end{split}
\end{align*}
where by convention, $\qfrak^n\equiv0$ for $n\leq 0$.
The expression of  $\qfrak^0$ is obvious since it satisfies the equation \eqref{eq:k=0}.
 Assuming that equality~\eqref{eq:qfrakk} is satisfied up to a give $(k-1)\geq0$. Then $\qfrak^k$ satisfies
 \begin{align}\label{eq:HK}
 \begin{split}  -\partial^2_\rho \qfrak^k+\gamma^2\qfrak^k&=-\frac{\afrak^0_{33}(\xt)}{\matEt_{33}}
 \left(\partial_\rho \Qfrak_{k-1}-\gamma \Qfrak_{k-1}\right)e^{-\gamma\rho}
\\&+e^{-\gamma\rho}\sum_{\ell=0}^{k-2}\frac{(-1)^\ell}{{\matEt_{33}}}\frac{\rho^\ell}{\ell!}\left(\frac{\rho}{\ell+1}\afrak^{\ell+1}_{33}
    \left(\partial_\rho \Qfrak_{k-2-\ell}-\gamma \Qfrak_{k-2-\ell}\right)+\Scal^\ell_{\partial\Omega,\matEt}(\Qfrak_{k-2-\ell})\right).\end{split}
\end{align} 
One then just has to observe that the righthand side of the above equality reads as $\Rfrak_{k-1}e^{-\gamma\rho}$, where $\Rfrak_{k-1}=\sum_{\ell=0}^{k-1}b_{\ell}(\xt)\rho^\ell$ is polynomial in $\rho$ and with smooth coefficients in $\partial\Omega$ by induction hypothesis. Denoting by $(c_\ell)_{\ell=1}^k$ the sequence of smooth functions defined on $\partial\Omega$ by
\begin{align*}
&c_{k}=\frac{b_{k-1}}{2\gamma k},
\intertext{for $\ell=k-1,\cdots,1$}
&c_{\ell}=\frac{1}{2\gamma \ell}\left((\ell+1)\ell c_{\ell+1}+b_{\ell}\right),
\end{align*}
then $\qfrak^k=\sum_{\ell=1}^{k}c_\ell(\xt)\rho^\ell e^{-\gamma\rho}$ satisfies \eqref{eq:HK} and vanishes at the boundaries $\rho=0$ and $\rho\rightarrow+\infty$. 

\noindent$\bullet$~{Proof of the estimates.}

The proof of the estimates is standard and we just recall here the main ingredients. The reader can refer to \cite{Haddar2005} for more details.
Thanks to Proposition~\ref{prop:expodec},  since  $\|q^\eps\|_{H^1(\Omega_\drm)}\leq e^{-\mu/\eps}$ for a specific $\mu>0$, and since 
$\sum_{\ell=0}^k\chi_d\eps^\ell\qfrak^\ell\circ\Phi^{-1}$ also decays exponentially fast in $\Omega_\drm$
one just has to prove the estimate in the tubular neighborhood $\Ocal_\drm$ of $\partial\Omega$, where local coordinates can be used.
Using \eqref{divEgrad}, simple a priori estimates show that 
$$\| q^\eps-\sum_{\ell=0}^n\chi_d\eps^\ell\qfrak^\ell(.,./\eps)\circ\Phi^{-1}\|_{H^1(\Omega)}\leq C_n\eps^{n-1/2},$$
for any $n$. Then the proposition is proven by observing that since $\qfrak^{k+1}P_{k+1}e^{-\gamma\rho}$, with $P_{k+1}$ polynomial in $\rho$ which vanishes in 0, one the following estimate 
\begin{align*}
\| q^\eps-\sum_{\ell=0}^k\chi_d\eps^\ell\qfrak^\ell(.,./\eps)\circ\Phi^{-1}\|_{H^1(\Omega)}&\leq\| q^\eps-\sum_{\ell=0}^{k+2}\chi_d\eps^\ell\qfrak^\ell(.,./\eps)\circ\Phi^{-1}\|_{H^1(\Omega)}+\eps^{k+1}\| \chi_d(\qfrak^{k+1}+\eps\qfrak^{k+2})\|_{H^1(\Omega)}\\&\leq C_k(\eps^{k+3/2}+\eps^{k+1/2}).\end{align*}
\end{proof}
\subsection{Asymptotic expansion of $p^\eps=(p^\eps_t+p^\eps_c)/2$}
We are now ready to approximate the function $p^\eps=(p^\eps_t+p^\eps_c)/2$, which is coupled to $q^\eps$ by Problem~\eqref{eq:pbpeps}.
Using the expansion of $q^\eps$  given by Proposition~\eqref{prop:estimexpan},  $p^\eps$ satisfies formally
\begin{subequations}\label{eq:pbpepsk}
\begin{align}
&-\nabla\cdot(\matE\nabla p^\eps)=-\chi_d\frac{\alpha^2-\beta^2}{\eps^2} \sum_{k\geq 0}\eps^k\qfrak^k(\xt, x_3/\eps),\qquad \text{in $\Omega$},\\
&p^\eps|_{\partial \Omega}=(\pi_t+\pi_c)/2,
\end{align}
\end{subequations}
where by abuse of notation the mapping $\Phi$ is omitted,  and where $\delta_0^k$ is the Kronecker symbol equal to 1 is $k=0$ and 0 elsewhere.

Since the each term  $\qfrak_k$ decays exponentially fast as $x_3/\eps$, a fine mesh is necessary to solve the above equality in order to capture the the source term $q^\eps$.
To prevent this drawback, we propose to determine an asymptotic expansion of $p^\eps$ which splits between the fast variable $x_3/\eps$
 and the slow variable . More precisely, we  look for 
 $$p^\eps=\sum_{k\geq 0}\eps^k (P_k+\chi_d(x_3)\pfrak_k(\xt,x_3/\eps)),$$
where  each function $\pfrak_k$ describes the behavior near the boundary while $P_k$ is a function depending on the slow-variable far from the boundary. 
As before, the idea is to use the expansion of the operator $\nabla\cdot(\matE\nabla\cdot)$ in the local coordinates
 \subsubsection{Expansion of $p^\eps$}
Injecting the Ansatz in Problem~\eqref{eq:pbpeps} we infer that for any $k\geq 0$ the profiles $(\pfrak_k)_{k\geq0}$ are defined on  $\Gamma\times(0,+\infty)$  vanish as $\rho$ goes to infinity and  satisfy
\begin{subequations} \label{eq:pfrakk}\begin{align}
&\partial^2_\rho\pfrak_0=\frac{\alpha^2-\beta^2}{\matEt_{33}} \qfrak_{0},
\intertext{and for $k\geq 1$, }
&\partial^2_\rho \pfrak_k= \frac{\alpha^2-\beta^2}{\matEt_{33}} \qfrak_{k}+\frac{\afrak^0_{33}(\xt)}{\matEt_{33}}
 \partial_\rho \pfrak_{k-1}
-\sum_{n=0}^{k-2}\frac{(-1)^n}{\matEt_{33}}\dfrac{\rho^n}{n!}\Lfrak_{\matEt,n}\pfrak_{k-n-2}.
 \end{align}
 \end{subequations}
  \begin{prop}\label{prop:pfrakk}
 For any $k\geq0$, there exists a unique function $\pfrak_k$ defined on $\Gamma\times\R^+$, vanishing as $\rho$ tends to infinity and satisfying Problem~\eqref{eq:pfrakk}. Moreover $\pfrak_k$ is  given by 
 $$\pfrak_k=\frac{\alpha^2-\beta^2}{\alpha^2+\beta^2}\qfrak_k.$$
More precisely, 
 \begin{align*}
 \pfrak^0(\xt,\rho)=\frac{\alpha^2-\beta^2}{\alpha^2+\beta^2}\frac{\pi_t-\pi_c}{2}e^{-\gamma\rho},
 \intertext{and for any $k\geq0$, }
 \pfrak^k(\xt,\rho)=\frac{\alpha^2-\beta^2}{\alpha^2+\beta^2} \Qfrak_{k}(\xt,\rho)e^{-\gamma\rho},
 \end{align*}
 where $\Qfrak_{k}$ is polynomial of degree $k$ in the variable $\rho$, which vanishes in $\rho=0$ and whose coefficients are smooth functions of  $\partial\Omega$.
In particular, 
$$\pfrak_k|_{\rho=0}=\delta^k_0\frac{\alpha^2-\beta^2}{\alpha^2+\beta^2}\frac{\pi_t-\pi_c}{2}.$$
 \end{prop}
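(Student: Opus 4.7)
The plan is to guess that the profiles for $p^\eps$ are simply proportional to those for $q^\eps$, with proportionality constant $\lambda := (\alpha^2-\beta^2)/(\alpha^2+\beta^2)$, and verify this directly. The motivation is the algebraic identity $\gamma^2 \matEt_{33} = \alpha^2+\beta^2$, which gives $\lambda \gamma^2 = (\alpha^2-\beta^2)/\matEt_{33}$. This is precisely the factor multiplying $\qfrak_k$ on the right-hand side of \eqref{eq:pfrakk}, and it is what allows the source terms in the two hierarchies to match.

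I would proceed by strong induction on $k$. For $k=0$, apply $\partial^2_\rho$ to $\lambda\qfrak_0$ and use $\partial^2_\rho \qfrak_0 = \gamma^2 \qfrak_0$ from \eqref{eq:k=0} to get $\partial^2_\rho(\lambda \qfrak_0) = \lambda\gamma^2 \qfrak_0 = \frac{\alpha^2-\beta^2}{\matEt_{33}}\qfrak_0$, which is exactly the first equation of \eqref{eq:pfrakk}. For the induction step at order $k\geq1$, assuming $\pfrak_n = \lambda \qfrak_n$ for all $n<k$, I substitute the equation \eqref{eq:k} for $\partial^2_\rho \qfrak_k$ into $\partial^2_\rho(\lambda\qfrak_k)$. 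The term $\lambda\gamma^2\qfrak_k$ becomes $\frac{\alpha^2-\beta^2}{\matEt_{33}}\qfrak_k$, while the terms $\lambda \frac{\afrak^0_{33}}{\matEt_{33}}\partial_\rho \qfrak_{k-1}$ and $\lambda \frac{(-1)^\ell}{\matEt_{33}}\frac{\rho^\ell}{\ell!}\Lfrak_{\matEt,\ell}(\qfrak_{k-2-\ell})$ are, by the induction hypothesis, equal to $\frac{\afrak^0_{33}}{\matEt_{33}}\partial_\rho \pfrak_{k-1}$ and $\frac{(-1)^\ell}{\matEt_{33}}\frac{\rho^\ell}{\ell!}\Lfrak_{\matEt,\ell}(\pfrak_{k-2-\ell})$ respectively, which matches \eqref{eq:pfrakk} after a sign check.

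For uniqueness, I would observe that Problem~\eqref{eq:pfrakk} is a second-order ODE in $\rho$ of the form $\partial^2_\rho \pfrak_k = g_k(\xt,\rho)$ with no zero-order term: the homogeneous solutions are affine in $\rho$, and the decay condition $\pfrak_k \to 0$ as $\rho\to+\infty$ forces both coefficients to vanish. Hence the solution that decays at infinity is unique, and the candidate $\lambda\qfrak_k$ built above is the solution. The explicit polynomial-times-exponential form of $\pfrak_k$, as well as the boundary value $\pfrak_k|_{\rho=0}=\delta^k_0 \lambda (\pi_t-\pi_c)/2$, then follow immediately from the corresponding statements \eqref{eq:qfrakk} for $\qfrak_k$ obtained in Proposition~\ref{prop:estimexpan}.

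The only subtle point, and the one requiring care, is bookkeeping of the indices in the inductive step: the equation for $\pfrak_k$ features $\pfrak_{k-1}$ and $\pfrak_{k-n-2}$ whereas the equation for $\qfrak_k$ features $\qfrak_{k-1}$ and $\qfrak_{k-2-\ell}$, so one must re-index $\ell=n$ and verify term-by-term that the proportionality passes through the operators $\partial_\rho$ and $\Lfrak_{\matEt,\ell}$ (both of which act linearly in $\qfrak$ with coefficients independent of $k$). Once this is done the proof reduces to a short direct verification.
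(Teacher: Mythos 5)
Your proposal is correct and follows essentially the same route as the paper: an induction showing that $\frac{\alpha^2-\beta^2}{\alpha^2+\beta^2}\qfrak_k$ satisfies Problem~\eqref{eq:pfrakk} (using $\gamma^2\matEt_{33}=\alpha^2+\beta^2$ to match the source terms), combined with uniqueness because two decaying solutions differ by an affine function of $\rho$ which must vanish. The explicit form and the boundary value at $\rho=0$ are then read off from Proposition~\ref{prop:estimexpan}, exactly as in the paper.
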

 \begin{proof}
Uniqueness easily comes from the fact that 2 functions satisfying \eqref{eq:pfrakk} are equal modulo an affine function, which is necessary zero since it vanishes as $\rho$ goes to infinity.

To prove that $\pfrak_k$ equals $(\alpha^2-\beta^2)\qfrak_k/\gamma^2$, one just has to use induction. Indeed it is obviously true for $k=0$, by the first equation of \eqref{eq:pfrakk}. Assume that this is true up to the rank $k-1\geq0$. One has, by definition of $\qfrak_k$:
\begin{align*}
\frac{\alpha^2-\beta^2}{\alpha^2+\beta^2}\partial^2_\rho \qfrak_k&= \frac{\alpha^2-\beta^2}{\matEt_{33}} \qfrak_{k}+\frac{\alpha^2-\beta^2}{\alpha^2+\beta^2}\frac{\afrak^0_{33}(\xt)}{\matEt_{33}}
 \partial_\rho \qfrak_{k-1}
-\frac{\alpha^2-\beta^2}{\alpha^2+\beta^2}\sum_{n=0}^{k-2}\frac{(-1)^n}{\matEt_{33}}\dfrac{\rho^n}{n!}\Lfrak_{\matEt,n}\qfrak_{k-n-2}\\
&=\frac{\alpha^2-\beta^2}{\matEt_{33}} \qfrak_{k}+\frac{\afrak^0_{33}(\xt)}{\matEt_{33}}
 \partial_\rho \pfrak_{k-1}
-\sum_{n=0}^{k-2}\frac{(-1)^n}{\matEt_{33}}\dfrac{\rho^n}{n!}\Lfrak_{\matEt,n}\pfrak_{k-n-2},
\end{align*}
by induction. Thus $\dfrac{\alpha^2-\beta^2}{\alpha^2+\beta^2}\qfrak_k$ and $\pfrak$ satisfy the same problem and both functions vanish at infinity, so they are equal, which ends the proof.
 \end{proof}
 Then  $P_0$ is given by
  \begin{align*}
&-\nabla\cdot(\matE\nabla P_0)=0,\qquad \text{in $\Omega$},\\
&P_0|_{\partial \Omega}=\left(\frac{\pi_t+\pi_c}{2}\right)-\frac{\alpha^2-\beta^2}{\alpha^2+\beta^2}\frac{\pi_t-\pi_c}{2},
\end{align*}
which is nothing but the problem~\eqref{eq:pbPtilde} satisfied by $\tilde{P}$,
and for any $k\geq1$, $P_k=0$.
One has the following proposition.
\begin{prop}\label{prop:peps} 
Assume that $\Omega$ is a smooth domain, and that $\pi_t$ and $\pi_c$ are smooth functions of $\partial\Omega$.

Then for any $k\geq0$, there exists $C_k$ depending on $\pi_t$, $\pi_c$ and their derivatives such that 
 \begin{align*}
 &\| p^\eps-P_0-\sum_{n=0}^k\eps^n\pfrak_n(\xt,x_3/\eps)\|_{L^2(\Omega)}\leq C_k\eps^{k+1},\\
&\| p^\eps-P_0-\sum_{n=0}^k\eps^n\pfrak_n(\xt,x_3/\eps)\|_{H^1(\Omega)}\leq C_k\eps^{k+1/2}
\end{align*}
\end{prop}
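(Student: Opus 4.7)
The plan is to avoid redoing any matched-asymptotic analysis for $p^\eps$ and instead exploit an exact algebraic identity: I claim that
\[
p^\eps = P_0 + \frac{\alpha^2-\beta^2}{\alpha^2+\beta^2}\, q^\eps \qquad \text{in } \Omega,
\]
so that the entire boundary-layer expansion of $p^\eps - P_0$ is just $\lambda:=(\alpha^2-\beta^2)/(\alpha^2+\beta^2)$ times the one already proven for $q^\eps$ in Proposition~\ref{prop:estimexpan}. This identity is strongly suggested by the fact that Proposition~\ref{prop:pfrakk} already asserts $\pfrak_k = \lambda\,\qfrak_k$, and that the Dirichlet datum of $P_0$ in \eqref{eq:pbPtilde} has been chosen precisely so that $P_0 + \lambda q^\eps$ carries the same boundary values as $p^\eps$.

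The first step is to verify this identity. Set $u^\eps := p^\eps - P_0 - \lambda\, q^\eps$. Combining equation \eqref{eq:pbpeps} for $p^\eps$, the harmonicity of $P_0$ from \eqref{eq:pbPtilde}, and equation \eqref{eq:pbqeps} for $q^\eps$ gives
\[
-\nabla\cdot(\matE\nabla u^\eps) = -\frac{\alpha^2-\beta^2}{\eps^2}\,q^\eps + \lambda\,\frac{\alpha^2+\beta^2}{\eps^2}\,q^\eps = 0 \quad \text{in } \Omega.
\]
For the boundary, a direct computation from the traces $p^\eps|_{\partial\Omega}=(\pi_t+\pi_c)/2$, $q^\eps|_{\partial\Omega}=(\pi_t-\pi_c)/2$ and $P_0|_{\partial\Omega}=(\pi_t+\pi_c)/2 - \lambda(\pi_t-\pi_c)/2$ yields $u^\eps|_{\partial\Omega}=0$. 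Coercivity of $\matE$ in \eqref{eq:coercivKE} gives uniqueness for this Dirichlet problem, so $u^\eps \equiv 0$, proving the identity.

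The second step is the direct conclusion. By Proposition~\ref{prop:pfrakk}, $\sum_{n=0}^k \eps^n \pfrak_n(\xt,x_3/\eps) = \lambda \sum_{n=0}^k \eps^n \qfrak^n(\xt,x_3/\eps)$, so
\[
p^\eps - P_0 - \sum_{n=0}^k \eps^n \pfrak_n(\xt,x_3/\eps) = \lambda\Bigl( q^\eps - \sum_{n=0}^k \eps^n \qfrak^n(\xt,x_3/\eps) \Bigr),
\]
and the two estimates of Proposition~\ref{prop:peps} follow immediately from the corresponding $L^2$ and $H^1$ estimates in Proposition~\ref{prop:estimexpan}, with constant $C_k$ equal to $|\lambda|$ times the one obtained there.

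There is essentially no hard step once the identity $p^\eps - P_0 = \lambda q^\eps$ is spotted. The only delicate point is a notational one: Proposition~\ref{prop:estimexpan} carries a cut-off $\chi_d$ in front of each profile (necessary because $\qfrak^\ell$ is defined on $\partial\Omega\times(0,+\infty)$ and must be lifted to $\Omega$ via the local chart $\Phi$), whereas Proposition~\ref{prop:peps} is stated without it. I would read the statement with the same implicit $\chi_d$ multiplying each $\pfrak_n$, which is consistent with the Ansatz $p^\eps = \sum_{k\geq 0}\eps^k(P_k + \chi_d \pfrak_k)$ used to derive \eqref{eq:pfrakk}, and then the estimate transfers verbatim.
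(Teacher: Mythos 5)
Your proof is correct, and it takes a genuinely different (and slicker) route than the paper. The identity $p^\eps=P_0+\lambda q^\eps$ with $\lambda=(\alpha^2-\beta^2)/(\alpha^2+\beta^2)$ does hold exactly: setting $u^\eps=p^\eps-P_0-\lambda q^\eps$, the equation \eqref{eq:pbqeps} for $q^\eps$ gives $-\nabla\cdot(\matE\nabla(\lambda q^\eps))=-\lambda\frac{\alpha^2+\beta^2}{\eps^2}q^\eps=-\frac{\alpha^2-\beta^2}{\eps^2}q^\eps$, which is precisely the right-hand side of \eqref{eq:pbpeps}, and the boundary datum of $\tilde P=P_0$ in \eqref{eq:pbPtilde} was built so that the traces cancel; uniqueness for the coercive Dirichlet problem then forces $u^\eps\equiv 0$. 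With Proposition~\ref{prop:pfrakk} ($\pfrak_k=\lambda\qfrak_k$) the error is exactly $\lambda$ times the error controlled in Proposition~\ref{prop:estimexpan}, and both estimates follow with constant $|\lambda|C_k$; your reading of the statement with the implicit cut-off $\chi_d$ is also the intended one. By contrast, the paper proves Proposition~\ref{prop:peps} by the standard machinery: injecting the ansatz, using the expansion \eqref{divEgrad} of $\nabla\cdot(\matE\nabla\cdot)$ in local coordinates, recursive energy estimates giving a crude $O(\eps^{k-1})$ bound, and then trading orders by pushing the expansion to rank $k+3$ and estimating the extra profiles ($\|\chi_\drm\pfrak_n\|_{L^2}\lesssim\sqrt\eps$, $\|\chi_\drm\pfrak_n\|_{H^1}\lesssim 1/\sqrt\eps$). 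What your argument buys is transparency and sharpness at no analytic cost, since everything is inherited from the $q^\eps$ analysis; what the paper's argument buys is robustness, since the energy-estimate scheme does not rely on the exact proportionality $p^\eps-P_0=\lambda q^\eps$, which is special to the weakly coupled, colinear-tensor case (same $\matE$ in both equations and $P_k=0$ for $k\geq1$) and would not survive the strongly coupled setting announced as future work.
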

\begin{proof}
The proof is standard and is performed recursively.
Using Proposition~\ref{prop:estimexpan},  the expansion of $\nabla\cdot(\matE\nabla\cdot)$, and by construction for any $k\geq 0$ standard energy estimates show that
$$\| p^\eps-P_0-\sum_{n=0}^k\eps^n\pfrak_n(\xt,x_3/\eps)\|_{H^1(\Omega)}\leq C_k\eps^{k-1},$$
then for a given $k$, one also has easily
\begin{align*} 
p^\eps-P_0-\sum_{n=0}^k\eps^n\pfrak_n(\xt,x_3/\eps)&= p^\eps-P_0-\sum_{n=0}^{k+3}\eps^n\pfrak_n(\xt,x_3/\eps)+\eps^k \sum_{n=1}^{3}\eps^n\pfrak_{k+n}(\xt,x_3/\eps).
\end{align*}
By construction, for any $n$ there exists a constant $C_{n}$ which depends on $n$, and $\pi_t, \pi_c$ and their derivatives such that
$$ \|\chi_\drm\pfrak_n(\xt,x_3/\eps)\|_{L^2(\Omega)}\leq C_{n}\sqrt{\eps},\quad \|\chi_\drm\pfrak_n(\xt,x_3/\eps)\|_{H^1(\Omega)}\leq \frac{C_{n}}{\sqrt{\eps}},$$
which end the proof.
\end{proof}

\section{Applications and numerical simulations}\label{sec:Num}
\subsection{Expression of the first order of approximation}
Even though the above Propositions~\ref{prop:estimexpan}--\ref{prop:peps} makes it possible to derive the whole expansion of $q^\eps$ and $p^\eps$, in the applications only the first order terms are often sufficient.
Using \eqref{eq:k=0}--\eqref{eq:k}, one infers successively
\begin{subequations}\label{eq:firstorder}
\begin{align}
&\qfrak^0(\xt,\rho)=\frac{(\pi_t-\pi_c)(\xt)}{2}e^{-\gamma\rho},\label{eq:qfrak0}\\
&\qfrak^1(\xt,\rho)=\frac{(\pi_t-\pi_c)(\xt)}{2}\frac{\afrak^0_{33}(\xt)}{2}\rho e^{-\gamma\rho},\label{eq:qfrak1}
 \end{align}
and thus Proposition~\ref{prop:pfrakk}, one infers
 \begin{align}
&\pfrak_0(\xt,\rho)=\frac{\alpha^2-\beta^2}{\gamma^2}\frac{(\pi_t-\pi_c)(\xt)}{2}e^{-\gamma\rho},\label{eq:pfrak0}\\
&\pfrak_1(\xt,\rho)=\frac{\alpha^2-\beta^2}{\gamma^2}\frac{\pi_t-\pi_c}{2}\frac{\afrak^0_{33}(\xt)}{2}\rho
 e^{-\gamma\rho}\label{eq:pfrak1},
 \end{align}
 \end{subequations}
while $\tilde{P}$ satisfies~\eqref{eq:pbPtilde}.
 \subsection{Numerical simulations}
 To illustrate our results, we perfom simulation in the simple spherical case, with constant Dirichlet conditions on the boundary of the unit sphere. By symmetry, the problem to solve reads as follows
 
 \begin{subequations}\label{eq:pbNum}
\begin{align}
&\frac{d^2}{dr^2}q^\eps+\frac{2}{r}\frac{d}{dr}q^\eps -\frac{\alpha^2+\beta^2}{\eps^2}q^\eps=0,&&\text{for $0<r<1$},
\\
&\frac{d^2}{dr^2}p^\eps+\frac{2}{r}\frac{d}{dr}p^\eps =\frac{\alpha^2-\beta^2}{\eps^2}q^\eps,&&\text{for $0<r<1$,}\label{eq:pbNump}
\intertext{with the boundary conditions}
&q^\eps|_{r=1}=\frac{\pi_t-\pi_c}{2}, \quad p^\eps|_{r=1}=\frac{\pi_t+\pi_c}{2},\qquad \frac{dq^\eps}{dr}|_{r=0}=\frac{dp^\eps}{dr}|_{r=0}=0.
\end{align}
\end{subequations}
Denote by $\gamma=\sqrt{\alpha^2+\beta^2}$. Note that $q^\eps$ admits the explicit formula 
$$q^\eps=\frac{\pi_t-\pi_c}{2}\frac{\sinh(\gamma r/\eps)}{r\sinh(\gamma \eps)}.$$
On the contrary,  the function $p^\eps$ does not have explicit solution and it requires a numerical method to be solved. We used here the full second order finite difference method, by discretizing the flux at the order 2 with a decentered stencil in $r=0$, and by approaching at the order 2 the operator $\dfrac{d^2}{dr^2} +\dfrac{1}{r}\dfrac{d}{dr}$ at any point $r_i$ of the discretization of the segment $(0,1)$. To avoid numerical instabilities near $r=0$ we multiply equation~\eqref{eq:pbNump} by $r$ and use the following approximation:
 $$r_i(\frac{d^2}{dr^2}f +\frac{1}{r}\frac{d}{dr}f)|_{r_i}\sim \frac{r_{i+1}f_{i+1}-2r_i f_i+r_{i-1}f_{i-1}}{\delta r^2}+O(\delta r^2),$$
 where $\delta r$ is the path of the discretization.
 It is worth noting that due to the exponential decay in $\gamma/\eps$ of $q^\eps$, solving numerically the problem satisfied by $p^\eps$ requires a very fine discretization especially the border $r=1$, as shown in Figure~\ref{fig:profilp}.
 \begin{figure}[h!]
\centering 
\includegraphics[width=.5\linewidth]{./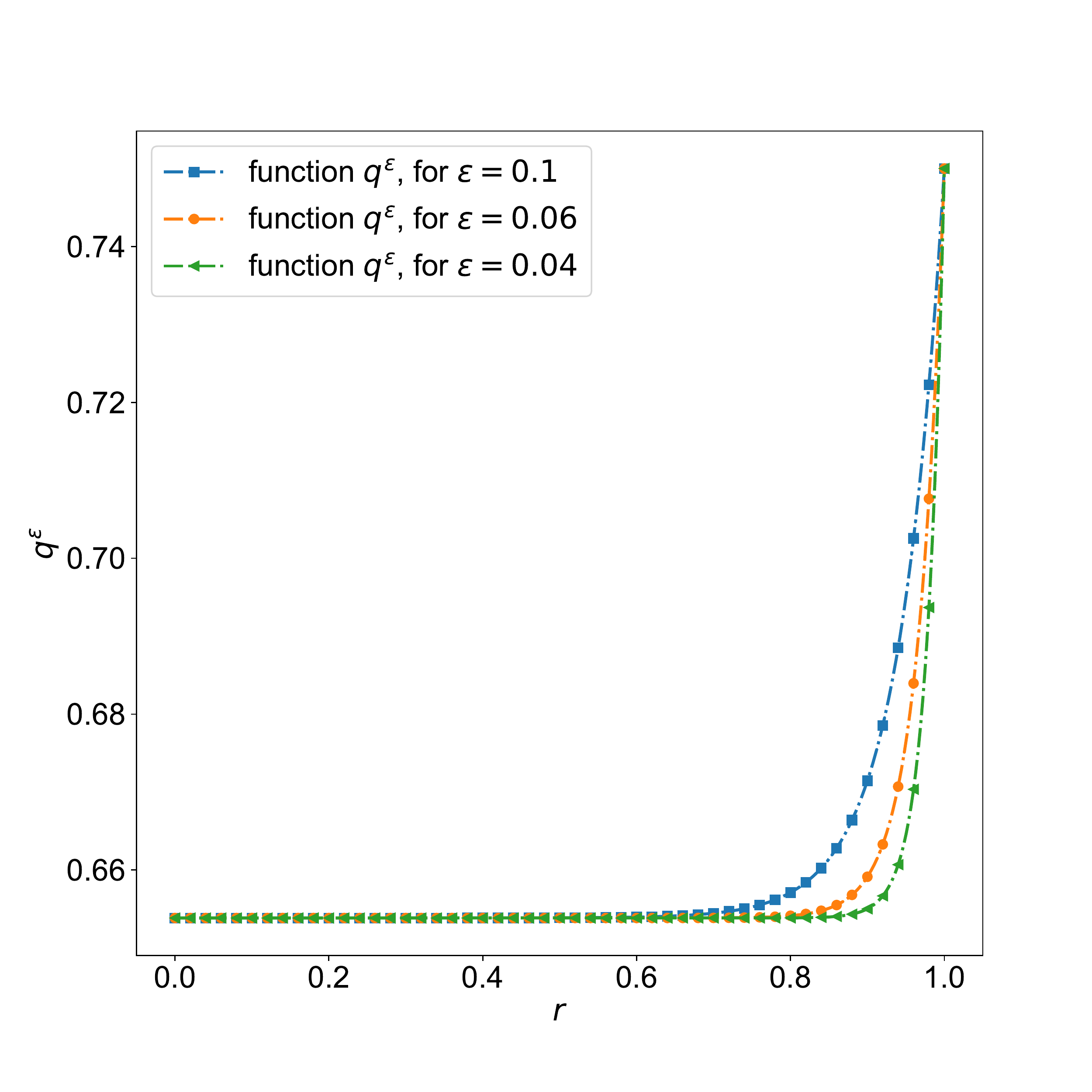}\hfill
\includegraphics[width=.5\linewidth]{./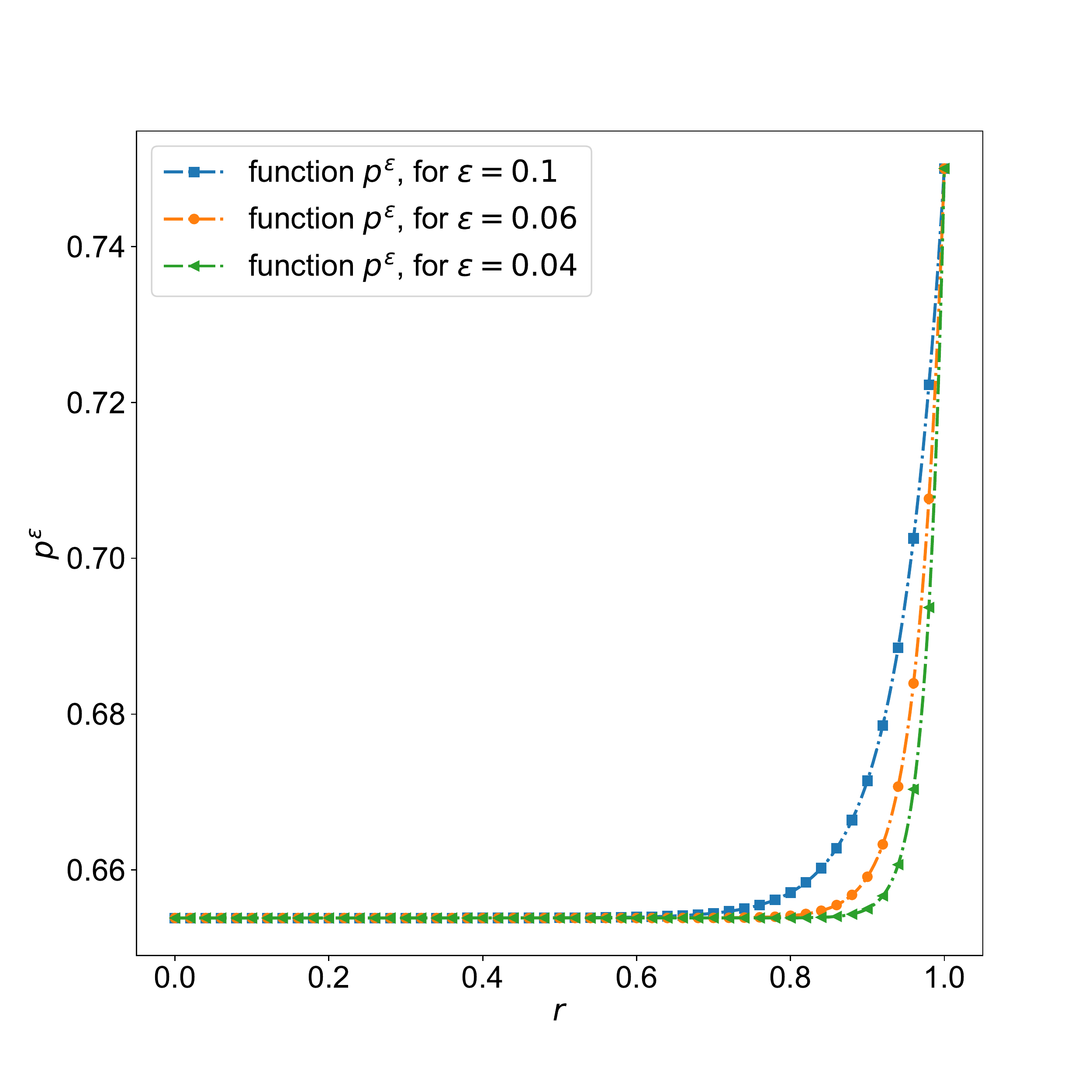}\hfill
\caption{Plots of the pressures  $q^\eps$ (Left) and $p^\eps$ (Right) for 3 values of $\varepsilon$: 0.1, 0.07 and 0.04. As $\eps$ goes to zero, a boundary layer appears near the boundary, making the numerical computation of the full problem costly and  unstable.
The following constants have been chosen: $\pi_t=0.5$, $\pi_c=1$, $\alpha=1$, $\beta=1.5$, and the discretization step is $\delta r=10^{-4}$. }
\label{fig:profilp}
\end{figure}
 
 On the other hand,  the solution to \eqref{eq:pbPtilde}  and the first order profiles given by~\eqref{eq:firstorder} provides the  approximation at the order $\eps^2$ of both $q^\eps$ and $p^\eps$ by
 denoting by $q_{approx,1}$ and $p_{approx,1}$ respectively the following functions:
\begin{subequations}
 \begin{align}
 q_{approx,1}(r)&=\frac{(\pi_t-\pi_c)(\xt)}{2}e^{-\gamma(1-r)/\eps}+ \frac{(\pi_t-\pi_c)(\xt)}{4}(1-r)e^{-\gamma(1-r)/\eps},\qquad \forall r\in(0,1)\\
 p_{approx,1}(r)&=\frac{\pi_t+\pi_c}{2}-\frac{\alpha^2-\beta^2}{\alpha^2+\beta^2}\left(\frac{\pi_t-\pi_c}{2}+q_{approx,1}\right),\qquad \forall r\in(0,1).
 \end{align}
 \end{subequations}
Figure~\ref{Fig:exact_num_comparisons} illustrates the order of convergence of Propositions~\ref{prop:estimexpan}--\ref{prop:peps}.

\begin{figure}[h!]
\centering 
\includegraphics[width=.5\linewidth]{./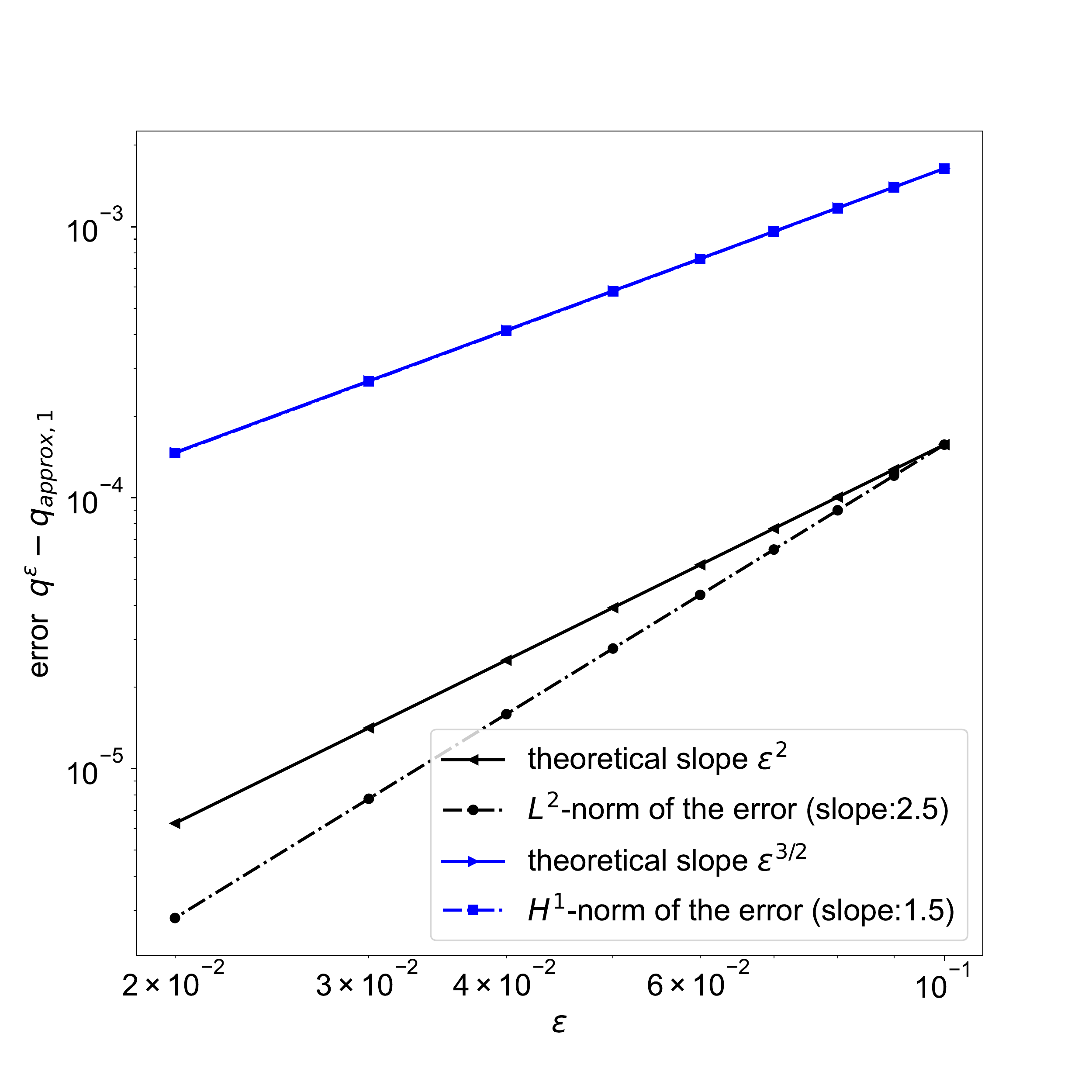}\hfill
\includegraphics[width=.5\linewidth]{./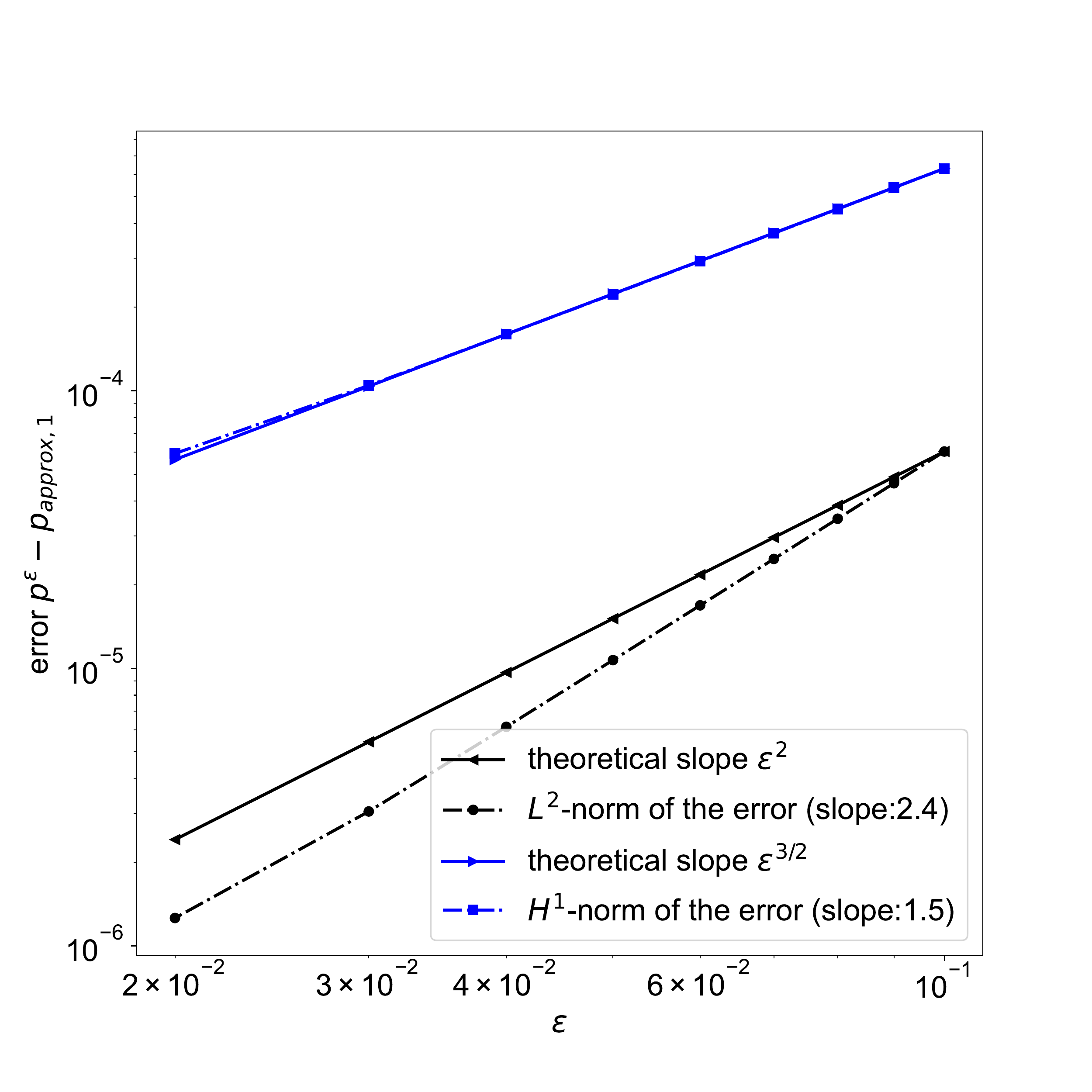}
\caption{(Left): Convergence rate of the L$^2$-norm and H$^1$-norm of $q^\eps-q_{approx,1}$. (Right): Convergence rate of the L$^2$-norm and H$^1$-norm of $p^\eps-p_{approx,1}$.
The following constants have been chosen: $\pi_t=0.5$, $\pi_c=1$, $\alpha=1$, $\beta=1.5$. In this simple particular case, the numerical slopes of the errors are slightly better than the theoretical results for the L$^2$ norm.}
\label{Fig:exact_num_comparisons}
\end{figure}

\end{document}